\newtheorem{theorem}{Theorem}[section]
\newtheorem{lemma}[theorem]{Lemma}
\theoremstyle{definition}
\newtheorem{remark}[theorem]{Remark}
\newtheorem{problem}[theorem]{Problem}
\numberwithin{equation}{section}
\begin{document}
\setcounter{page}{1}
\setcounter{firstpage}{1}
\setcounter{lastpage}{4}
\renewcommand{\currentvolume}{??}
\renewcommand{\currentyear}{??}
\renewcommand{\currentissue}{??}
\title{A Subexponential Reduction from Product Partition to Subset Sum}
\author{Marius Costandin}
%\address{``Babe\c{s}-Bolyai'' University, \\ Faculty of Mathematics and Computer Sciences\\
%1, Kog\u{a}lniceanu Street,\\
%400084 Cluj-Napoca,\\
%Romania}
\email{costandinmarius@gmail.com}
%
%\author{Bogdan Gavrea}
%%\address{``Babe\c{s}-Bolyai'' University, \\ Faculty of Mathematics and Computer Sciences\\
%%1, Kog\u{a}lniceanu Street,\\
%%400084 Cluj-Napoca,\\
%%Romania}
%\email{bogdan.gavrea@math.utcluj.ro}
%
%\author{John Smith}
%\address{``Babe\c{s}-Bolyai'' University, \\ Faculty of Mathematics and Computer Sciences\\
%1, Kog\u{a}lniceanu Street,\\
%400084 Cluj-Napoca,\\
%Romania}
%\email{your e-mail address}
%
\subjclass{90-08}
\keywords{ non-convex optimization \and NP-Complete}
\begin{abstract}
In this paper we study the Product Partition Problem (PPP), i.e. we are given a set of $n$ natural numbers represented on $m$ bits each and we are asked if a subset exists such that the product of the numbers in the subset equals the product of the numbers not in the subset.  Our approach is to obtain the integer factorization of each number. This is the subexponential step. We then form a matrix with the exponents of the primes and show that the PPP has a solution iff some Subset Sum Problems have a common solution. Finally, using the fact that the exponents are not large we combine all the Subset Sum Problems in a single Subset Sum Problem (SSP) and show that its size is polynomial in $m,n$. We show that the PPP has a solution iff the final SSP has one.
\end{abstract}
\maketitle

%\section{First section (usually Introduction)}
%It is well known (see ~\cite{nam1}) that ...
%
%\section{Second section}
%
%\begin{definition}
%Let $F:\mathbb{R}\to \mathbb{R}$ be...
%\end{definition}
%
%\begin{remark}
%If $\mathbf{r}$ is smoother, then ...
%\end{remark}
%
%Clearly,
%\begin{equation}\label{equ1}
%\mathbf{A}=\mathbf{B}+\boldsymbol{\rho}.
%\end{equation}
%
%Also,
%\begin{equation}\label{equ2}
%\boldsymbol{\rho}=a(\boldsymbol{\nu}(t)\cos \theta+\boldsymbol{\beta}(t)\sin \theta).
%\end{equation}
%Combining~(\ref{equ1}) and~(\ref{equ2}), we obtain...
%
%\subsection{First subsection}
%When using ...
%
%\begin{theorem}
%Let $f$ be a given function...
%\end{theorem}
%
%\begin{proof}
%Let us consider....
%\end{proof}
%
%\begin{example}
%Let us consider...
%\end{example}

\section{Introduction}

In this paper the well known Product Partition Problem (PPP) is discussed and some results are obtained for it, namely a subexponential reduction scheme to Subset Sum Problem (SSP). The subexponential step (but not polynomial) is due to factoring of the natural numbers given in the PPP problem. Once the integer factorization is done, all the remaining steps are polynomial. There are several main ideas exploited throughout the paper:
\begin{enumerate}
\item For the product partition problem, one needs an instance of each prime in each set.
\item The power of each prime should be equal in both sets
\item The exponents are small, i.e. they require at most $\log(m)$ bits 
\end{enumerate}  

The cases 1,2 allow the formulation of a set of intermediary subset sum problems which have a common solution iff the PPP has a solution. The case 3 allows combining the intermediary subset sums in one subset sub problem (SSP) of polynomial size.

\section{Main results}

%First, the well known Product Partition Problem (PPP) is discussed and some results are obtained for it, namely a subexponential reduction scheme to Subset Sum Problem (SSP). Next, we define a new problem, namely Unbounded Product Partition Problem (UPPP), as a relaxation of PPP. We give a subexponential algorithm which can solve it.
%\subsection{Product Partition Problem (PPP) reduction to SSP}
For $n \in \mathbb{N}$, let $S \in \mathbb{N}^{n}$. For $S = \begin{bmatrix} s_1, \hdots, s_n \end{bmatrix}^T$ we assume that $s_i$ is represented on at most $m\in \mathbb{N}$ bits, hence $s_i \leq 2^m$. The Product Partition Problem (PPP) asks:
\begin{problem} \label{P2.1}
 Exists $\mathcal{C} \subseteq \{1, \hdots,n\}$ such that 
\begin{align}\label{E2.1}
\prod_{i\in \mathcal{C}} s_i = \prod_{i \in \{1, \hdots,n\} \setminus \mathcal{C}} s_i \hspace{1cm}?
\end{align} 
\end{problem}

Since interger factoring is known to be subexponential \cite{intF}, the prime factors of the numbers in $S$ can be obtained in subexponential time. Let $\{p_1, \hdots, p_q\}$ be the  set of all the prime numbers involved, in ascending order. As such it is obtained
\begin{align}
s_i = \prod_{k=1}^q p_k^{\alpha_{ik}} \hspace{1cm} \forall i \in \{1, \hdots,n\}
\end{align} where $\alpha_{ik} \in \{0\} \cup \mathbb{N}$.

Construct the Table \ref{Ta1}. 

\begin{table}[h] 
\begin{tabular}{ |p{1cm}|p{1cm}|p{1cm}|p{1cm}|  }
 \hline
 \multicolumn{4}{|c|}{general representation} \\
 \hline
 $ $& $p_1$ & $\hdots$ &$p_q$\\
 \hline
$s_1$&   $\alpha_{11}$ & $\hdots$  &$\alpha_{1q}$\\
 $\vdots$   & $\vdots$    &$\ddots$ &$\vdots$ \\
 $s_n$ &$\alpha_{n1}$ &$\hdots$ &$\alpha_{nq}$\\
 \hline
\end{tabular}
\caption{Matrix representation of the prime powers involved}
\label{Ta1}
\end{table}

From Table \ref{Ta1} define the matrix
\begin{align}\label{E2.3}
S_M = \begin{bmatrix} \alpha_{11} &\hdots &\alpha_{1q}\\ \vdots &\ddots &\vdots\\ \alpha_{n1} &\hdots &\alpha_{nq}\end{bmatrix} \in \mathbb{N}^{n \times q}.
\end{align}

The following result can be stated:
\begin{lemma}\label{L2.1}
The PPP has a solution iff exists $x \in \{0,1\}^n$ such that 
\begin{align}\label{E2.4}
\left(x - \frac{1}{2}\cdot 1_{n\times 1}\right)^T\cdot S_M = 0_{1 \times q}
\end{align}
\end{lemma}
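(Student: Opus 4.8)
The plan is to translate the multiplicative condition of the PPP into an additive condition on the exponent vectors, prime by prime.

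First I would observe that a subset $\mathcal{C}$ is naturally encoded by its indicator vector $x \in \{0,1\}^n$, where $x_i = 1$ iff $i \in \mathcal{C}$. With this encoding, for each prime $p_k$ the total exponent of $p_k$ appearing in the product over $\mathcal{C}$ is $\sum_{i=1}^n x_i \alpha_{ik}$, while the total exponent appearing in the product over the complement is $\sum_{i=1}^n (1 - x_i)\alpha_{ik}$. The core idea is that, by uniqueness of prime factorization, the equality of the two products in \eqref{E2.1} holds if and only if the exponent of each prime agrees on both sides, i.e.
\begin{align}\label{Eproof1}
\sum_{i=1}^n x_i \alpha_{ik} = \sum_{i=1}^n (1 - x_i)\alpha_{ik} \qquad \forall k \in \{1,\hdots,q\}.
\end{align}

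Next I would rearrange \eqref{Eproof1} for each fixed $k$. Bringing all terms to one side gives $\sum_{i=1}^n \bigl(2x_i - 1\bigr)\alpha_{ik} = 0$, equivalently $\sum_{i=1}^n \bigl(x_i - \tfrac{1}{2}\bigr)\alpha_{ik} = 0$ after dividing by $2$. The collection of these $q$ scalar equations, one per prime, is exactly the statement that the row vector $\bigl(x - \tfrac{1}{2}\cdot 1_{n\times 1}\bigr)^T$ annihilates every column of $S_M$, i.e. $\bigl(x - \tfrac{1}{2}\cdot 1_{n\times 1}\bigr)^T S_M = 0_{1\times q}$, which is precisely \eqref{E2.4}. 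Since $k$ ranges over all primes and the $k$-th column of $S_M$ holds the exponents $(\alpha_{1k},\hdots,\alpha_{nk})^T$, the vector equation and the family of scalar equations are literally the same.

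Both directions then follow from this chain of equivalences: a solution $\mathcal{C}$ of the PPP yields its indicator $x$ satisfying \eqref{E2.4}, and conversely any $x \in \{0,1\}^n$ satisfying \eqref{E2.4} defines $\mathcal{C} = \{ i : x_i = 1\}$ whose two products have identical prime factorizations and hence are equal. I expect the only substantive point requiring care to be the justification that equality of the two products is equivalent to prime-by-prime equality of exponents; this rests on the fundamental theorem of arithmetic applied to both sides, together with the fact that every prime dividing any $s_i$ appears among $p_1,\hdots,p_q$, so no prime is omitted from the comparison. The algebraic rearrangement into the centered form $x - \tfrac12 1_{n\times 1}$ is routine once the exponent-matching condition \eqref{Eproof1} is in place.
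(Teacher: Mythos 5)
Your proof is correct and follows essentially the same route as the paper's: encode $\mathcal{C}$ by its indicator vector, invoke unique factorization to reduce equality of products to prime-by-prime equality of exponents, and rearrange to the centered form $\left(x - \tfrac{1}{2}\cdot 1_{n\times 1}\right)^T S_M = 0_{1\times q}$. The paper's own proof is only a terse sketch of this argument, so your version simply fills in the algebra it leaves implicit.
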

\begin{proof}
In order to have an equal product in (\ref{E2.1}), one needs every prime in each side to have equal power. 

Let $\mathcal{C} \subseteq \{1, \hdots, n\}$ be a solution to PPP, then define $x^T\cdot e_i = 1$ for all $i \in \mathcal{C}$ and zero otherwise. On the other hand, let $x$ be a solution to (\ref{E2.4}), then let $\mathcal{C} = \{ i\in \{1, \hdots, n\} | x^T \cdot e_i = 1\}$. 
\end{proof}

Let $\alpha_{\cdot,k} \in \mathbb{N}^{n}$ denote the column $k$ in $S_M$ for all $k \in \{1, \hdots, q\}$ and  $\alpha_{i,\cdot} \in \mathbb{N}^q$ denote the line $i$ in $S_M$ for all $i \in \{1, \hdots, n\}$. 
\begin{remark}\label{R2.2}
According to Lemma \ref{L2.1} solving the PPP is equivalent with finding $x \in \{0,1\}^n$ with 
\begin{align}
x^T\cdot \alpha_{\cdot,k} = \frac{1}{2}\cdot 1_{n \times 1}^T \cdot \alpha_{\cdot, k} \hspace{0.5cm} \forall k \in \{1, \hdots, q\}
\end{align} i.e. solving simultaneously $q$ Subset Sum Problems. 
\end{remark}

 However, solving simultaneously $q$ subset sum problems is known to be strongly NP-complete, hence it can not have, in general, a polynomial reduction to the Subset Sum Problem, which is known to be weakly NP-Complete, unless P = NP. 

One can take advantage here on the following observation:
\begin{remark} \label{R2.3}
The elements of $S_M$ are not large! Indeed, since they are actually the exponents of the primes forming each number, if each number is represented on $m$ bits, hence is smaller than $2^m$ and each prime is at least $2$, follows that in particular $\alpha_{ik} \leq m$ for all $i \in \{1, \hdots, n\}$ and $k \in \{1, \hdots, q\}$. 
\end{remark}
\subsection{Classical Analytical Analysis}
Next, consider the following result:
\begin{lemma}\label{L2.4}
Let $N \in \mathbb{N}$ large enough, $q \in \mathbb{N}$ fixed and $y_k \in \left[-\frac{N}{2},\frac{N}{2} \right] \cap \mathbb{Z}$ for all $k \in \{1, \hdots, q\}$. Then one has
\begin{align}
y_1 + y_2 \cdot 2 \cdot N + \hdots + y_q \cdot q \cdot N^{q-1} = 0 \iff y_k = 0 \hspace{0.5cm} \forall k 
\end{align}
\end{lemma}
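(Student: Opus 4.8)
The reverse implication is immediate: if every $y_k=0$ then the left-hand side is a weighted sum of zeros, hence $0$. So the entire content lies in the forward implication, and the plan is to show that the coefficients $c_k := k\,N^{k-1}$ grow fast enough that no nontrivial choice of bounded ``digits'' $y_k$ can cancel.

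The approach I would take is a \emph{dominant-term} argument. Assume, for contradiction, that $\sum_{k=1}^{q} y_k\,k\,N^{k-1} = 0$ while not all $y_k$ vanish, and let $j$ be the largest index with $y_j\neq 0$. Isolating the top term gives $y_j\,j\,N^{j-1} = -\sum_{k=1}^{j-1} y_k\,k\,N^{k-1}$. Taking absolute values, using $|y_j|\ge 1$ on the left (as $y_j$ is a nonzero integer) and $|y_k|\le N/2$ on the right, yields
\begin{align}
j\,N^{j-1} \;\le\; |y_j|\,j\,N^{j-1} \;=\; \Big|\sum_{k=1}^{j-1} y_k\,k\,N^{k-1}\Big| \;\le\; \frac{N}{2}\sum_{k=1}^{j-1} k\,N^{k-1}.
\end{align}

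The crux is therefore the elementary estimate $\frac{N}{2}\sum_{k=1}^{j-1} k\,N^{k-1} < j\,N^{j-1}$, equivalently $\sum_{k=1}^{j-1} k\,N^{k} < 2\,j\,N^{j-1}$. I would prove this by separating the top term $(j-1)N^{j-1}$ from the tail $\sum_{k=1}^{j-2} k\,N^{k}$ and bounding the tail crudely via the geometric sum $\sum_{k=1}^{j-2}N^{k} = \frac{N^{j-1}-N}{N-1} \le N^{j-1}$ (valid for $N\ge 2$), which gives tail $\le (j-2)N^{j-1}$ and hence the full sum $\le (2j-3)N^{j-1} < 2jN^{j-1}$. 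Substituting this strict bound into the chain above produces $j\,N^{j-1} < j\,N^{j-1}$, the desired contradiction; so every $y_k=0$, with the boundary case $j=1$ covered by the same display with an empty tail sum.

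The only things to watch are that $j=1$ case and making the inequality \emph{strict}; note the estimate already holds for all $N\ge 2$, so the hypothesis ``$N$ large enough'' is comfortably met. I would deliberately avoid the tempting modular/base-$N$ alternative (reduce the identity modulo $N$ to get $y_1\equiv 0$, divide by $N$, and induct): because of the extra factor $k$ in the coefficients, the inductive step produces a congruence of the form $\ell\,y_\ell\equiv 0 \pmod N$, which does \emph{not} force $y_\ell=0$ when $\gcd(\ell,N)>1$ (e.g. when $N$ is even). The magnitude argument sidesteps this entirely, which is why I expect the leading-term inequality, rather than any divisibility bookkeeping, to be the main (and essentially only) obstacle.
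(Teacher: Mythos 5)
Your proof is correct, and it shares the paper's basic skeleton (bound the lower-order terms by $\frac{N}{2}\sum_{k}kN^{k-1}$ and show the leading term dominates), but the execution is genuinely different and in two respects tighter. First, where you prove the key inequality $\frac{N}{2}\sum_{k=1}^{j-1}kN^{k-1} < j\,N^{j-1}$ by an explicit finite estimate (splitting off the top term and bounding the rest by a geometric sum, valid for every $N\ge 2$), the paper writes the same sum as $\frac{N}{2}P'(N)$ with $P(x)=x+\cdots+x^{q-1}$, computes $P'$ in closed form, and then shows only that the ratio $\frac{(N/2)P'(N)}{qN^{q-1}}$ tends to $\frac{q-1}{2q}<1$ as $N\to\infty$ --- an asymptotic statement that matches its ``$N$ large enough'' hypothesis but gives no explicit threshold; your version quantifies the hypothesis as $N\ge 2$, which is what one actually needs when the lemma is later invoked with the concrete value $N=nm$. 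Second, the paper's argument as written only establishes that the top coefficient $y_q$ must vanish and leaves the descent to ``all $y_k=0$'' implicit, whereas your choice of $j$ as the \emph{largest} index with $y_j\neq 0$ packages the whole induction into a single contradiction and explicitly covers the degenerate case $j=1$. Your closing remark about why the mod-$N$/base-$N$ digit argument fails (the factor $k$ in the coefficients only yields $\ell\,y_\ell\equiv 0 \pmod N$) is accurate and worth keeping, since that is the most tempting wrong route here.
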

\begin{proof}
We show that 
\begin{align}
\left| y_1 + y_2 \cdot 2 \cdot N + \hdots + y_{q-1} \cdot (q-1) \cdot N^{q-2} \right| < q \cdot N^{q-1}.
\end{align} Indeed
\begin{align}\label{E2.8}
&\left| y_1 + y_2 \cdot 2 \cdot N + \hdots + y_{q-1} \cdot (q-1) \cdot N^{q-2} \right| \leq \nonumber \\
& \frac{N}{2} \cdot \left(1 + 2\cdot N + \hdots + (q-1)\cdot N^{q-2} \right)
\end{align}

Next, for some $x \in \mathbb{R}$, consider the polynomial:
\begin{align}
P'(x) &= 1 + 2\cdot x + 3 \cdot x^2 + \hdots + (q-1) \cdot x^{q-2} \nonumber \\
&= \frac{d}{dx} P(x) 
\end{align} where 
\begin{align}
P(x) = x + x^2 + \hdots + x^{q-1} = \frac{x^q - 1}{x - 1} - 1
\end{align}  hence
\begin{align}\label{E2.11}
P'(x) &= \frac{d}{dx} \left(\frac{x^q - 1}{x - 1} - 1 \right) = \frac{q \cdot x^{q-1} (x-1) - \left(x^q - 1\right)}{(x-1)^2} \nonumber \\
& = \frac{(q-1)\cdot x^{q} - q\cdot x^{q-1} + 1}{(x-1)^2}
\end{align} From (\ref{E2.8}) and (\ref{E2.11}) one gets
\begin{align}\label{E2.12}
 N \cdot P'(N) &= \frac{N}{N-1} \cdot \left( (q-1) \cdot \frac{N^q}{N-1} - q \cdot \frac{N^{q-1}}{N-1} + \frac{1}{N-1} \right) \nonumber \\
& = \frac{N^2}{(N-1)^2}  \cdot (q-1)\cdot N^{q-1} - \frac{N^2}{(N-1)^2}\cdot  q \cdot N^{q-2} + \frac{N}{(N-1)^2}
\end{align} Since,

\begin{align}
\frac{N^2}{(N-1)^2} = \frac{N^2 - 2 \cdot N + 1 }{(N-1)^2} + \frac{2 \cdot N - 1}{(N-1)^2} = 1 +\frac{2 \cdot N - 1}{(N-1)^2} 
\end{align} the equation (\ref{E2.12}) becomes
\begin{align}
N \cdot P'(N) =& (q-1)\cdot N^{q-1} + \frac{N}{(N-1)^2}\nonumber \\
& + \left( \frac{2\cdot N^2 - N}{(N-1)^2} \cdot (q-1) - \frac{N^2}{(N-1)^2}\cdot q\right) \cdot N^{q-2} 
\end{align} which gives the coefficinet of $N^{q-2}$ as the middle paranthesis
\begin{align}
\frac{N}{N-1} \cdot q - \frac{2\cdot N^2 - N}{(N-1)^2} \to^{N\to \infty} q - 2 
\end{align} It follows that

\begin{align}
\frac{N \cdot P'(N)}{(q-1)\cdot N^{q-1} + (q-2)\cdot N^{q-2}} \to^{N\to\infty} 1
\end{align} Finally
\begin{align}
\frac{\frac{N}{2} \cdot P'(N)}{q \cdot N^{q-1}} &= \frac{(q-1)\cdot N^{q-1} + (q-2)\cdot N^{q-2}}{q \cdot N^{q-1}} \cdot  \frac{\frac{N}{2}\cdot P'(N)}{(q-1)\cdot N^{q-1} + (q-2)\cdot N^{q-2}} \nonumber \\
& \to^{N\to \infty} \frac{q-1}{q} \cdot \frac{1}{2} < 1
\end{align}
\end{proof}

Lemma \ref{L2.4} suggests a method of combining the $q$ Subset Sums from Remark \ref{R2.2} into one subset sum as follows. Let $N = n \cdot m$ and define the numbers:
\begin{align}
\hat{s}_i = \sum_{k=1}^q \alpha_{i,k}\cdot k \cdot (n\cdot m)^{k-1} \hspace{0.5cm} \forall i \in \{1, \hdots, n\}
\end{align} then the set 
\begin{align}
\hat{S} = \begin{bmatrix} \hat{s}_1 &\hdots &\hat{s}_n \end{bmatrix}^T \in \mathbb{N}^n
\end{align}

Define a target for the Subset Sum as follows:
\begin{align}\label{E2.20a}
\hat{T} =  \sum_{k=1}^q T_k \cdot k \cdot (n\cdot m)^{k-1} 
\end{align} where  $T_k = \sum_{i = 1}^n \frac{\alpha_{i,k}}{2} = \frac{1}{2} \cdot 1_{n\times1}^T \cdot \alpha_{\cdot,k}$. These are assumed natural numbers, otherwise the PPP is infeasible. 

We can take a moment to acknowledge that the memory requirements for storing $\hat{S}$ is indeed polynomial. Indeed
\begin{align}
\left| \hat{s}_i \right| \leq q \cdot |\alpha_{i,k}| \cdot q \cdot (2 \cdot n\cdot m)^q \leq q^2 \cdot m \cdot (m\cdot n)^q
\end{align} since $|\alpha_{i,k}| \leq m$ as showed in Remark \ref{R2.3}. Taking logarithm of $|\hat{s}_{i}|$ gives the required number of bits
\begin{align}
\log_{2}(|\hat{s}_{i}|) \leq 2\cdot \log_2(q) + \log_{2}(m) +  q \cdot \log_2( m \cdot n)
\end{align} We address now the quantity $q$, i.e. the number of prime numbers needed to write the original product $\prod_{i=1}^n s_i$. We show that this number is a polynomial in $n,m$. Indeed, since the original numbers $s_i$ were represented on $m$ bits, their maximum range is bounded above by $2^m$. By the use of Prime Number Theorem, there are on average, at most $\log(2^m) =  m \cdot \log(2)$ prime numbers smaller than $s_i$, therefore the maximum numbers of prime factors  $s_i$ can have is less that $m\cdot 2$ (on average). Therefore, $q$ is at most $2\cdot m \cdot n$.  In a similar way one can prove that $\hat{T}$ requires polynomial memory.

The obvious now Subset Sum Problem is:
\begin{problem} \label{P2.5}
Exits $x \in \{0,1\}^n$ such that 
\begin{align}
x^T \cdot \hat{S} = \hat{T} \hspace{1cm}?
\end{align} 
\end{problem} 

Next, we use Lemma \ref{L2.4} to show that Problem \ref{P2.5} has a solution iff   Problem \ref{P2.1} has one. The main result of this paper is:
\begin{theorem}\label{T2.7}
Problem \ref{P2.1} has a solution iff Problem \ref{P2.5} has one.
\end{theorem}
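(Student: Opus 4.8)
The plan is to prove both implications by relating the single linear combination that defines Problem \ref{P2.5} to the $q$ separate constraints of Remark \ref{R2.2}. The forward implication is a direct computation, while the reverse implication is where the arithmetic separation guaranteed by Lemma \ref{L2.4} does the real work.

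For the forward direction, suppose Problem \ref{P2.1} has a solution. By Lemma \ref{L2.1} together with Remark \ref{R2.2} there is $x \in \{0,1\}^n$ with $x^T \cdot \alpha_{\cdot,k} = T_k$ for every $k \in \{1,\hdots,q\}$. Substituting the definition of $\hat{s}_i$ and interchanging the two finite sums gives
\begin{align}
x^T \cdot \hat{S} = \sum_{k=1}^q k \cdot (n\cdot m)^{k-1} \cdot \left(x^T \cdot \alpha_{\cdot,k}\right) = \sum_{k=1}^q k \cdot (n\cdot m)^{k-1} \cdot T_k = \hat{T},
\end{align}
so the same $x$ solves Problem \ref{P2.5}.

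For the reverse direction, suppose $x \in \{0,1\}^n$ solves Problem \ref{P2.5}, i.e. $x^T \cdot \hat{S} = \hat{T}$. I would set $y_k = x^T \cdot \alpha_{\cdot,k} - T_k = \sum_{i=1}^n \left(x_i - \frac{1}{2}\right) \alpha_{i,k}$ for each $k$. The same rearrangement as above turns $x^T \cdot \hat{S} - \hat{T} = 0$ into
\begin{align}
\sum_{k=1}^q y_k \cdot k \cdot (n\cdot m)^{k-1} = 0.
\end{align}
Then I would verify the hypotheses of Lemma \ref{L2.4} with $N = n\cdot m$. Each $y_k$ is an integer, since $x^T \cdot \alpha_{\cdot,k}$ is an integer and each $T_k$ is assumed natural (otherwise Problem \ref{P2.1} is infeasible). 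For the range, since $x_i - \frac{1}{2} \in \left\{-\frac{1}{2}, \frac{1}{2}\right\}$ and $0 \leq \alpha_{i,k} \leq m$ by Remark \ref{R2.3}, one has $\left|y_k\right| \leq \frac{1}{2}\sum_{i=1}^n \alpha_{i,k} \leq \frac{1}{2}\, n\, m = \frac{N}{2}$, hence $y_k \in \left[-\frac{N}{2}, \frac{N}{2}\right] \cap \mathbb{Z}$. Lemma \ref{L2.4} then forces $y_k = 0$ for all $k$, that is $x^T \cdot \alpha_{\cdot,k} = T_k$ for every $k$, and by Remark \ref{R2.2} and Lemma \ref{L2.1} this $x$ solves Problem \ref{P2.1}.

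The main obstacle is the legitimacy of invoking Lemma \ref{L2.4}: it is stated for ``$N$ large enough'' with ``$q$ fixed'', whereas in the reduction both $N = n\cdot m$ and $q$ grow with the instance (with $q \leq 2\,n\,m = 2N$). Rather than lean on the asymptotic form, I would verify the strict inequality that actually drives the separation directly for the relevant parameters, namely that for every $\ell \leq q$
\begin{align}
\frac{N}{2}\left(1 + 2\cdot N + \hdots + (\ell-1)\cdot N^{\ell-2}\right) < \ell \cdot N^{\ell-1}.
\end{align}
A crude bound on this geometric-type sum shows the inequality holds whenever $N > \frac{2\ell}{\ell+1}$, and therefore for every $N \geq 2$ irrespective of $\ell$; since $N = n\cdot m \geq 2$ in any nontrivial instance, the separation is valid for all $q$ at once. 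Given this uniform inequality, the argument proceeds as in Lemma \ref{L2.4}: the top term $y_q \cdot q \cdot N^{q-1}$ dominates the sum of all lower terms in absolute value, so a nonzero integer $y_q$ would make the total nonzero, forcing $y_q = 0$, after which one induces downward on $k$. Closing this gap between the asymptotic statement and the uniform bound is the only delicate point; everything else is bookkeeping.
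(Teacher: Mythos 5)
Your proof follows the same route as the paper's: the forward direction is the identical interchange-of-sums computation, and the reverse direction sets $y_k = x^T\cdot\alpha_{\cdot,k} - T_k$, checks $|y_k|\le \frac{nm}{2}$ via Remark \ref{R2.3}, and appeals to the separation property with $N = n\cdot m$. The one place you diverge is also the most valuable part of your write-up: you are right that Lemma \ref{L2.4} as stated (``$N$ large enough, $q$ fixed'') does not literally license the application, since in the reduction $q$ grows with the instance (indeed $q \le 2nm = 2N$) and the paper's proof of the lemma only establishes the dominance inequality in the limit $N\to\infty$ for fixed $q$. The paper itself does not address this mismatch; it simply invokes the lemma with $N = nm$. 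Your repair --- bounding $\sum_{j=1}^{\ell-1} j N^{j-1}$ by $(\ell-1)\frac{N^{\ell-1}-1}{N-1}$ and checking that $\frac{N}{2}\cdot\frac{(\ell-1)N}{N-1} \le \ell N$ whenever $N \ge \frac{2\ell}{\ell+1}$, hence for all $N\ge 2$ uniformly in $\ell$ --- is correct and gives the non-asymptotic inequality
\begin{align}
\frac{N}{2}\left(1 + 2N + \hdots + (\ell-1)N^{\ell-2}\right) < \ell\, N^{\ell-1}
\end{align}
that the downward induction on $k$ actually needs. So your argument is not merely a restatement: it closes a genuine gap in the paper's justification of the reverse implication, at the modest cost of assuming $nm\ge 2$, which excludes only trivial instances.
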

%
%Before proving the theorem, we acknowledge the following remark concerned with the number $q$. The Lemma \ref{L2.4} requires a constant $q$, while for our problem at hand, $q$ is not necessarily constant, but finally a function of $n,m$. Since we took $N = 2 \cdot  m\cdot n$ while constructing the  subset sum, a direct application may not be valid for some readers. However, as long as $0 \leq \frac{q-1}{q} \leq 1$ is ensured 

%the following remark motivates, at least partially, a direct application of the Lemma \ref{L2.4}.
%
%\begin{remark}
%The number $q$ is preciselly the number of distinct prime factors of the original product $\prod_{i=1}^n s_i$. This function is known as the Prime omega function $\omega(x) = \sum_{p | x}1$ 
%\begin{align}
%q := \omega\left( \prod_{i=1}^n s_i \right) \approx \log\left( \log \left( \prod_{i=1}^n s_i \right)\right) \leq \log(m\cdot n \cdot \log(2))
%\end{align} hence $q \leq  \log(N)$ on average order. 
%\end{remark} 
%We preceed now with the proof of the Theorem \ref{T2.7}
\begin{proof}
First, it is obvious that if the Product Partition Problem (PPP) has a solution then the SSP problem has a solution as well. For this use Lemma \ref{L2.1} and the Remark \ref{R2.2} to obtain $x \in \{0,1\}^n$ which solves simultaneously the $q$ subset sums given in Remark \ref{R2.2}.  Then

\begin{align}\label{E2.25}
x^T\cdot \hat{S} &= \sum_{i=1}^n x_i \cdot \sum_{k=1}^q \alpha_{i,k} \cdot k \cdot (m \cdot n)^{k-1} \nonumber \\
& = \sum_{k=1}^q k \cdot (m \cdot n)^{k-1} \cdot \sum_{i=1}^n x_i \cdot \alpha_{i,k} \nonumber \\
& =  \sum_{k=1}^q k \cdot (m \cdot n)^{k-1} \cdot x^T \cdot \alpha_{\cdot , k}
\end{align} but because $x$ simultaneously solves the SSPs in Remark \ref{R2.2} follows that $x^T \cdot \alpha_{\cdot , k} = \frac{1}{2}\cdot 1_{n \times 1}^T \cdot \alpha_{\cdot,k} $ hence from (\ref{E2.25}) follows that $x^T \cdot \hat{S} = \hat{T} $ i.e. solves the Problem \ref{P2.5}.

On the other hand, let $x$ be a solution to Problem \ref{P2.5} then $x^T\cdot \hat{S} = \hat{T}$, hence from (\ref{E2.25}) and the definition of $\hat{T}$ in (\ref{E2.20a}) one gets:
\begin{align}\label{E2.26}
0 = x^T\cdot \hat{S} - \hat{T} =  \sum_{k=1}^q k \cdot (m \cdot n)^{k-1} \cdot \left( x^T \cdot \alpha_{\cdot , k} - \frac{1}{2} \cdot 1_{n\times 1}^T \cdot \alpha_{\cdot, k}\right).
\end{align} Note that since $m \geq \alpha_{i, k} \geq 0$ one has
\begin{align}
 -\frac{1}{2} \cdot 1_{n\times 1}^T \cdot \alpha_{\cdot, k} & \leq x^T \cdot \alpha_{\cdot , k} - \frac{1}{2} \cdot 1_{n\times 1}^T \cdot \alpha_{\cdot, k} \leq \frac{1}{2} \cdot 1_{n\times 1}^T \cdot \alpha_{\cdot, k} \nonumber \\
 -\frac{m\cdot n}{2} &\leq x^T \cdot \alpha_{\cdot , k} - \frac{1}{2} \cdot 1_{n\times 1}^T \cdot \alpha_{\cdot, k} \leq \frac{m \cdot n}{2}.
\end{align}  Taking $N = n \cdot m$ in Lemma \ref{L2.4} follows that in (\ref{E2.26}) one has 
\begin{align}
x^T \cdot \alpha_{\cdot , k} - \frac{1}{2} \cdot 1_{n\times 1}^T \cdot \alpha_{\cdot, k} = 0
\end{align} for all $k \in \{1, \hdots, q\}$ hence $x$ simultaneously solves the SSPs in Remark \ref{R2.2} and hence solves Problem \ref{P2.1}.
\end{proof} 

\subsection{Numerical Analysis Improvements}
Note that the Problem \ref{P2.5} can be solved exactly using dynamical programming in  time and memory exponential in $q$ i.e. the number of distinct primes in the product $\prod_{i=1}^n s_i$. As a matter of fact, $q$ is actually the Prime omega function 
\begin{align}
q  := \omega\left(\prod_{i=1}^n s_i \right).
\end{align}

In this subsection, we want to improve the results from the previous subsection, namely the obtained Subset Sum Problem. For this, we give a stronger version of Lemma \ref{L2.4}. It's validity however, shall only be analyzed based on some numerical simulations.  We first note that a similar lemma to Lemma \ref{L2.4} is true

\begin{lemma}\label{L2.8}
For $q \in \mathbb{N}$ fixed, exists $N \in [1, 2]$ such that for all $y_k \in \left[-\frac{N}{2},\frac{N}{2} \right] \cap \mathbb{Z}$ %for all $k \in \{1, \hdots, q\}$. Then one has 
one has
\begin{align}
y_1 + y_2 \cdot 2 \cdot N + \hdots + y_q \cdot q \cdot N^{q-1} = 0 \iff y_k = 0 \hspace{0.5cm} \forall k 
\end{align}
\end{lemma}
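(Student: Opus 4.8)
The plan is to reduce the forward implication to a statement about polynomial roots, rather than to the magnitude estimates that carried Lemma~\ref{L2.4}. The reverse implication is immediate, so I would fix integers $y_1,\dots,y_q$ and suppose
\begin{align}
y_1 + y_2\cdot 2\cdot N + \dots + y_q\cdot q\cdot N^{q-1}=0.
\end{align}
The key observation is that the left-hand side is exactly $P(N)$, where $P(X)=\sum_{k=1}^q y_k\,k\,X^{k-1}$ is a polynomial of degree at most $q-1$ with integer coefficients. Hence the hypothesis says precisely that the chosen $N$ is a root of $P$, and the whole argument hinges on picking $N\in[1,2]$ at which no nonzero admissible $P$ can vanish.

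First I would choose $N\in[1,2]$ \emph{transcendental} (such numbers are abundant there, e.g. $N=\pi/2$ or $N=e-1$). For a transcendental $N$ the equality $P(N)=0$ forces $P\equiv 0$, since a transcendental number is by definition a root of no nonzero integer polynomial. As the coefficient of $X^{k-1}$ in $P$ is $k\,y_k$ with $k\ge 1$, this yields $y_k=0$ for every $k$, which is the conclusion. I would stress that this argument needs \emph{no} bound on the $y_k$ whatsoever: it proves the equivalence for arbitrary integers $y_k$, hence in particular for the box $y_k\in[-N/2,N/2]\cap\mathbb{Z}$ named in the statement.

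If instead one insists on $N$ rational (or integer), so that the combined weights stay integers and Problem~\ref{P2.5} remains a genuine Subset Sum instance, the stated box must be read literally. For $N\in[1,2)$ one has $[-N/2,N/2]\cap\mathbb{Z}=\{0\}$, so the forward direction is vacuous; for the endpoint $N=2$ the box is $\{-1,0,1\}$ and a one-line dominance estimate suffices, namely
\begin{align}
j\cdot 2^{j-1} > \sum_{k=1}^{j-1} k\cdot 2^{k-1} = 1+(j-2)\cdot 2^{j-1},
\end{align}
so the highest-index nonzero term cannot be cancelled by the lower ones, and a downward induction forces $y_k=0$ for all $k$.

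The main obstacle — and surely the reason only numerical evidence is offered — is that these two regimes pull in opposite directions. The application in Remark~\ref{R2.2} produces residuals $y_k=x^T\cdot\alpha_{\cdot,k}-\frac{1}{2}\cdot 1_{n\times1}^T\cdot\alpha_{\cdot,k}$ with $|y_k|$ as large as $\frac{mn}{2}$, which is flatly incompatible with the tiny box $[-N/2,N/2]$ imposed by $N\in[1,2]$. Thus the transcendental choice is the only one that can control such large $y_k$, yet it destroys the integrality of the $\hat{s}_i$ required for a Subset Sum reduction. Making the improvement rigorous therefore means replacing the transcendental $N$ by a sufficiently sharp rational approximation $N=a/b$ and bounding $|P(a/b)|$ away from $0$ for every nonzero $y$ in the \emph{large} box $|y_k|\le \frac{mn}{2}$; this is a separation (Liouville-type) estimate whose admissible denominators grow with $q$, and it is exactly this quantitative step that the simulations are meant to probe.
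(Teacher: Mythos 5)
Your argument is correct and, importantly, it is an actual proof, whereas the paper's own ``proof'' of Lemma~\ref{L2.8} consists of inspecting plots of the quotient $\frac{\frac{N}{2}P'(N)}{qN^{q-1}}$ for a few values of $q$ and observing that the curve appears to cross the line $1$ before the abscissa reaches $2$; the paper concedes this is only numerical evidence. Your route is genuinely different and more illuminating: you observe that the left-hand side is the integer-coefficient polynomial $P(X)=\sum_{k=1}^{q}k\,y_k X^{k-1}$ evaluated at $N$, so any transcendental $N\in[1,2]$ forces $P\equiv 0$ and hence $y_k=0$ for \emph{arbitrary} integers $y_k$; and that, read literally, the lemma is in any case nearly vacuous, since $[-N/2,N/2]\cap\mathbb{Z}=\{0\}$ for every $N\in[1,2)$ and equals $\{-1,0,1\}$ at $N=2$, where your dominance identity $\sum_{k=1}^{j-1}k\cdot 2^{k-1}=1+(j-2)\cdot 2^{j-1}<j\cdot 2^{j-1}$ closes the case by downward induction on the top nonzero index. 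What your analysis buys, and what the paper's plots obscure, is the diagnosis that the lemma as stated cannot support the intended improvement: the residuals $y_k=x^T\cdot\alpha_{\cdot,k}-\frac{1}{2}\cdot 1_{n\times 1}^T\cdot\alpha_{\cdot,k}$ arising from Remark~\ref{R2.2} lie in $\left[-\frac{mn}{2},\frac{mn}{2}\right]$, far outside the box $[-N/2,N/2]$ when $N\le 2$, while a transcendental base destroys the integrality needed for a genuine Subset Sum instance; making the subsection rigorous would require exactly the quantitative Liouville-type separation estimate you describe, which is the step the simulations do not supply.
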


\begin{proof} 
We give here a proof based on a plot of  of numerical graphic of a certain polynomial quotient. As for Lemma \ref{L2.4}, we want to show that: 
\begin{align}
\left| y_1 + y_2 \cdot 2 \cdot N^L + \hdots + y_{q-1} \cdot (q-1) \cdot \left(N^L\right)^{q-2} \right| < \left| y_q\cdot q \cdot \left(N^L\right)^{q-1}\right| .
\end{align} for every $q$. 

Indeed, 
\begin{align}
&\left| y_1 + y_2 \cdot 2 \cdot N + \hdots + y_{q-1} \cdot (q-1) \cdot N^{q-2} \right| \leq \nonumber \\
& \frac{N}{2} \cdot \left(1 + 2\cdot N + \hdots + (q-1)\cdot N^{q-2} \right) = \frac{N}{2}\cdot P'\left(N\right)
\end{align}

Unfortunately, we only do this by analysing how the plot of $\frac{\frac{N}{2}\cdot P'(N)}{ q \cdot N^{L \cdot (q-1)}} $ looks for different values of $q$. We give the following representative figure
\begin{figure}[h]
\centering
\includegraphics[scale=0.55]{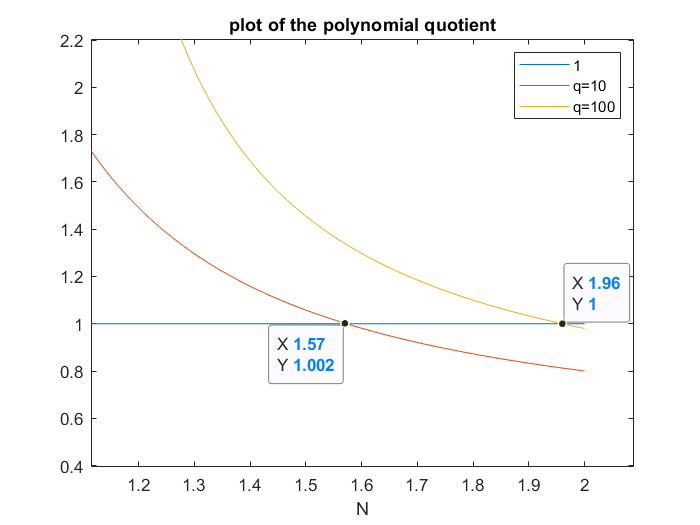}
\caption{In this figure one can see $\frac{\frac{N}{2}\cdot P'(N)}{ q \cdot N^{L \cdot (q-1)}} $ for different values of $q$.}
\label{Fig1}
\end{figure}

Different values of $q$ will generate different plots. The higher the $q$ is the closer the intersection of the quotient to the $1$ line is. However, it seems that the intersection always happens before the abscisa equals $2$. 
\end{proof}

Lemma \ref{L2.8} suggests a different method of combining the $q$ Subset Sums from Remark \ref{R2.2} into one subset sum as follows.  Define the numbers:
\begin{align}
\hat{s}_i = \sum_{k=1}^{q} \alpha_{i,k}\cdot k \cdot N(q)^{k-1} %+ \alpha_{i,q}\cdot q\cdot \left(N_L^L\right)^{q-1} \hspace{0.5cm} \forall i \in \{1, \hdots, n\}
\end{align} then the set 
\begin{align}
\hat{S} = \begin{bmatrix} \hat{s}_1 &\hdots &\hat{s}_n \end{bmatrix}^T \in \mathbb{N}^n
\end{align} where $1 \leq N(q) \leq 2$ is the intersection of the quotient to the line $1$ in Figure \ref{Fig1}.

Define a target for the Subset Sum as follows:
\begin{align}\label{E2.20a}
\hat{T} =  \sum_{k=1}^{q} T_k \cdot k \cdot N(q)^{k-1} %+ T_q\cdot q\cdot \left(N_L^L\right)^{q-1}
\end{align} where  $T_k = \sum_{i = 1}^n \frac{\alpha_{i,k}}{2} = \frac{1}{2} \cdot 1_{n\times1}^T \cdot \alpha_{\cdot,k} \in \mathbb{N}$ are assumed natural, otherwise the PPP is infeasible.  
%
%Finally, because 
%\begin{align}\label{E2.36}
%x^T\cdot \hat{S} &= \sum_{i=1}^n x_i \cdot \left( \frac{1}{m\cdot n}\sum_{k=1}^{q-1} \alpha_{i,k} \cdot k \cdot \left(N_L^L \right)^{k-1} + \alpha_{i,q}\cdot q \cdot \left(N_L^L\right)^{q-1}\right) \nonumber \\
%& =  \sum_{k=1}^{q-1} k \cdot \left(N_L^L \right)^{k-1} \cdot \frac{x^T \cdot \alpha_{\cdot , k}}{n\cdot m} + q\cdot \left( N_L^L\right)^{q-1}\cdot x^T\cdot \alpha_{\cdot,q}
%\end{align} therefore
%
%\begin{align}
%&x^T\cdot \hat{S} - \hat{T} = \nonumber \\
%& =  \sum_{k=1}^{q-1} k \cdot \left(N_L^L \right)^{k-1} \cdot \frac{x^T \cdot \alpha_{\cdot , k} - \hat{T}_k}{n\cdot m} + q\cdot \left( N_L^L\right)^{q-1}\cdot \left( x^T\cdot \alpha_{\cdot,q} - T_q \right)
%\end{align}
%
%Letting $y_k = \frac{x^T \cdot \alpha_{\cdot , k} - \hat{T}_k}{n\cdot m}$ for all $k \in \{1, \hdots, q-1\}$ and $y_q = x^T\cdot \alpha_{\cdot,q} - T_q $ follows that $|y_k| \leq \frac{1}{2}$ see (\ref{E2.26}) for $k \leq q-1$ and $y_q \in \mathbb{Z}$ hence Theorem \ref{T2.7} can be proven with the SSP defined in this subsetion as well.

Some final remarks are presented below:
%\begin{remark}
%Snce $N_L^L$ decreases continuously with $L$ it can be rational, hence the SSP in this subsection has rational entries, not integers. 
%\end{remark}

\begin{remark} The size of the numbers involved, $\hat{s}_i$, is still exponential in $q$, but the base of the exponent is a rational number smaller than $2$. However, if the numbers in the final SSP, we proved that $N(q) = 2$ is the smallest base for the exponent we can take in general.  
\end{remark}

Finally we relax the PPP in the following way: 

\begin{problem}
Given $S \in \mathbb{N}^n$, exists $K \in \mathbb{N}^n $ and $\mathcal{C} \subseteq \{1, \hdots, n\}$such that
\begin{align}
\prod_{i \in \mathcal{C}} s_i^{k_i} = \prod_{i\in \{1, \hdots, n\}\setminus \mathcal{C}}s_i^{k_i} \hspace{0.5cm}?
\end{align}
\end{problem}

It can be shown that the above relaxation has a solution if exists $x \in \mathbb{Q}^n$ such that 
\begin{align}
x^T \cdot S_M = 0_{1\times q}
\end{align} with $x^T\cdot e_i \neq 0$ for all $i \in \{1, \hdots, n\}$ where $S_M$ is given by (\ref{E2.3}).

\section{Conclusion and future work}
 
Given a Product Partition Problem we showed that it can be reduced in sub-exponential time to an instance of Subset Sum Problem. The size of the obtained SSP problem is showed to be polynomial in the size of the initial PPP problem. The subexponential step is due to the factorization of the involved natural numbers in the original PPP problem. This shows that if integer factorization would pe polynomial, then the PPP could be solved by solving the SSP.

\end{document}